\newcommand{\CM}{{\mathcal M}}
\newtheorem{theorem}{Theorem}[section]
\newtheorem{lemma}[theorem]{Lemma}
\newtheorem{proposition}[theorem]{Proposition}
\theoremstyle{definition}
\newtheorem{definition}[theorem]{Definition}
\newtheorem{example}[theorem]{Example}
\theoremstyle{remark}
\newtheorem{remark}[theorem]{Remark}
\numberwithin{equation}{section}
\begin{document}

\title[A Reduction Method for Higher Order Variational Equations]{A Reduction Method for Higher Order Variational Equations of Hamiltonian Systems}

%    Information for first author
\author{\rm{A.\, {\sc Aparicio\, Monforte}}
}
%    Address of record for the research reported here
\address{XLIM, Universit\'{e}\, de\, Limoges, France}
%    Current address
%\curraddr{Department of Mathematics and Statistics,
%Case Western Reserve University, Cleveland, Ohio 43403}
\email{ainhoa.aparicio-monforte@unilim.fr}

%    \thanks will become a 1st page footnote.
\thanks{The first author was supported by a Grant from the Region Limousin (France).}  

%    Information for second author
\author{\rm{ J.-A.\, {\sc Weil}}
}
\address{XLIM, Universit\'{e}\, de\, Limoges, France}
\email{jacques-arthur.weil@unilim.fr}

%\thanks{Support information for the second author.}

%    General info
%\subjclass{34M03, 34M15, 34M25, 34Mxx, 20Gxx, 17B45, 17B80, 34A05, 34A26, 34A99}
  \renewcommand{\subjclassname}{%
    \textup{2010} Mathematics Subject Classification}
\subjclass[2010]{Primary 
37J30, %Obstructions to integrability (nonintegrability criteria)
34A05, % ODE, Explicit solutions and reductions
68W30, %  	Symbolic computation and algebraic computation %% 14Q20  	Effectivity, complexity
34M15, %ODE, Algebraic aspects (differential-algebraic, hypertranscendence, group
34M25, % ODE, Formal solutions, transform techniques
34Mxx,  % ODE
20Gxx ; % Linear algebraic groups
Secondary 
20G45,  % Linear Algebraic Groups, Applications to physics
32G81, % Complex Variables, Applications to physics
34M05, %ODE Entire and meromorphic solutions
37K10, % Infinite dimensional systems : Completely integrable systems, integrability tests
17B80 % Algebras, Applications to integrable systems
}
% OLDER : 34M03, 34M15, 34M25, 34Mxx, 20Gxx, 17B45, 17B80, 34A05, 34A26, 34A99}

\date{June 2010 and, in revised form, Oct 12, 2010.}

%\dedicatory{This paper is dedicated to our advisors.}

\keywords{Differential Galois Theory, Integrability, Dynamical Systems}

\begin{abstract}
Let $\mathbf{k}$ be a differential field and let $[A]\,:\,Y'=A\,Y$ be a linear differential system where $A\in\mathrm{Mat}(n\,,\,\mathbf{k})$. We say that $A$ is in a reduced form if $A\in\mathfrak{g}(\bar{\mathbf{k}})$ where $\mathfrak{g}$ is the Lie algebra of $[A]$ and $\bar{\mathbf{k}}$ denotes the algebraic closure of $\mathbf{k}$. We owe the existence of such reduced forms to a result due to Kolchin and Kovacic \cite{Ko71a}.
This paper is devoted to the study of reduced forms,  of (higher order) variational equations along
a particular solution of a complex analytical hamiltonian system $X$.
Using a previous result \cite{ApWea}, we will assume that the first order
variational equation has an abelian Lie algebra so that, at first order, there are no Galoisian obstructions to Liouville integrability.
We give a strategy to (partially) reduce the variational equations at order $m+1$ if the variational equations at order $m$ are already in a reduced form and their Lie algebra is abelian.
Our procedure stops when we meet obstructions to the meromorphic integrability of $X$.
We make strong use both of the lower block triangular structure of the variational equations and of the notion of associated Lie algebra of a linear differential system (based on the works of Wei and Norman in \cite{WeNo63a}).
Obstructions to integrability appear when at some step we obtain a non-trivial commutator between
a diagonal element and a nilpotent (subdiagonal) element of the associated Lie algebra.
We use our method coupled with a reasoning on polylogarithms to give a new and systematic proof of the non-integrability of the H\'enon-Heiles system.
We conjecture that our method is not only a partial reduction procedure but a complete reduction algorithm. In the context of complex Hamiltonian systems, this would mean that our method would be an  effective version of the Morales-Ramis-Sim\'o theorem.
\end{abstract}

\maketitle

%\tableofcontents
\section{Introduction}

Let $(\mathbf{k}\,,\, '\,)$ be a differential field and let $[A]: \; Y'=AY$ be a linear differential system with $A\in \mathcal{M}_{n}(\mathbf{k})$.
We say that the system is in {\em reduced form} if its matrix can be decomposed as $A=\sum^{d}_{i=1} \alpha_i A_i$ where $\alpha_i \in \mathbf{k}$ and $A_i\in Lie(Y'=AY)$, the Lie algebra of the differential Galois group of $[A]$.

This notion of reduced form was introduced in \cite{Ko71a} and subsequently used (for instance \cite{MiSi96a} and \cite{MiSi96b}) to study the inverse problem.
It has been revived, with a constructive emphasis, in \cite{ApWea}. It is a powerful tool in various aspects of linear differential systems.
The main contribution of this work lies in the context of  Hamiltonian mechanics and Ziglin-Morales-Ramis theory \cite{MoRaSi07a}: reduced forms provide a new and powerful effective method to obtain (non-)abelianity and integrability obstructions from higher variational differential equations.

This article is structured in the following way.
First we lay down the background on Hamiltonian systems, differential Galois theory, integrability and Morales-Ramis-Sim\'o theorem. In section \ref{section: reduced forms}, we define precisely the notions of reduced form and Wei-Norman decomposition and the link between them.
Section \ref{section: reduced VEm} contains the theoretical core of this work: we focus on the application of reduced forms to the study of the meromorphical integrability of Hamiltonian systems. We introduce a reduction method for block lower triangular linear differential systems and apply it to higher variational equations, in particular when the Lie algebra of the diagonal blocks is abelian and of dimension 1.
 In section \ref{section: new proof}, we demonstrate the use of  this method, coupled with our reduction algorithm for matrices in $\mathfrak{sp}(2,\mathbf{k})$ \cite{ApWea}  by giving a new, effective and self-contained Galoisian non-integrability proof of the degenerate H\'enon-Heiles system (\cite{Mo99a} ,\cite{MoRaSi07a}, \cite{MaSi09a}) which has long served as a key example in this field.

\section{Background}
\subsection{Hamiltonian Systems}

Let $(M\,,\,\omega)$ be a complex analytic symplectic manifold of complex dimension $2n$ with $n\in\mathbb{N}$. Since $M$ is locally isomorphic to an open domain $U\subset\mathbb{C}^{2n}$, Darboux's theorem allows us to choose a set of local coordinates  $(q\,,\,p)=(q_1 \,\ldots q_n\,,\, p_1\ldots p_n)$ in which the symplectic form $\omega$ is expressed as $J:=\tiny\left[\begin{array}{cc}0 & I_n \\-I_n & 0\end{array}\right]$.  In these coordinates, given a function $H\in C^{2}(U)\,:\,U\,\longrightarrow\,\mathbb{C}$ (the Hamiltonian) we define a Hamiltonian system over $U\in\mathbb{C}^{2n}$, as the differential equation given by the vector field $X_H:= J\nabla H$:
\begin{equation}\label{(1)}
\begin{array}{cccc}
\dot{q}_i = \frac{\partial H }{\partial p_i}(q\,,\,p) &,& \dot{p}_i = -\frac{\partial H }{\partial q_i}(q\,,\,p)& \text{for} \,\, i=1\ldots n
\end{array}
\end{equation}

The Hamiltonian $H$ is constant over the integral curves of (\ref{(1)}) because $X_H\cdot H:=\langle \nabla H\,,\, X_H\rangle = \langle \nabla H \,,\, J\nabla H\rangle =0$. Therefore, integral curves lie on the energy levels of $H$.
A function $F\,:\, U\, \longrightarrow \,\mathbb{C}$ meromorphic  over $U$ is called a \emph{meromorphic first integral of } (\ref{(1)})   if it is constant over the integral curves of (\ref{(1)})  (equivalently  $X_H \cdot F =0$). Observe that the Hamiltonian is a first integral of (\ref{(1)}).

The Poisson bracket $\lbrace \,,\,\rbrace$ of two meromorphic functions $f, g$ defined over a symplectic manifold, is defined by $\lbrace f \,,\, g \rbrace:=\langle \nabla f \,,\, J\nabla g\rangle$; in  the  Darboux coordinates its expression is $\lbrace f \,,\, g \rbrace = \sum^{n}_{i=1} \frac{\partial f}{\partial q_i}\frac{\partial g}{\partial p_i}-\frac{\partial f}{\partial p_i}\frac{\partial g}{\partial q_i}$. The Poisson bracket endows the set of first integrals with a structure of Lie algebra. A function $F$ is a first integral of (\ref{(1)})  if and only if $\lbrace F\,,\, H\rbrace=0$  (i.e $H$ and $F$ are \emph{in involution}).

A Hamiltonian system with $n$ degrees of freedom, is called \emph{meromorphically Liouville integrable} if it possesses $n$ first integrals (including the Hamiltonian) meromorphic over $U$ which are functionally independent and in pairwise involution.

\subsection{Variational equations}\label{subsection:variational equations}
Among the various  approaches to the study of meromorphic integrability of complex Hamiltonian systems,
 we choose a  Ziglin-Morales-Ramis type of approach. Concretely, our starting points are the Morales-Ramis \cite{Mo99a} Theorem and its generalization, the Morales-Ramis-Sim\'o Theorem \cite{MoRaSi07a}. These two results give necessary conditions for the meromorphic integrability of Hamiltonian systems. We need to introduce here the notion of variational equation of order $m\in\mathbb{N}$ along a non punctual integral curve of (\ref{(1)}).

Let $\phi(z,t)$ be the flow defined by the equation (\ref{(1)}).
For $z_0 \in \Gamma$, we let $\phi_{0}(t):=\phi(z_0 \,,\, t)$  denote a temporal parametrization of a non punctual integral curve $\Gamma$ of (\ref{(1)})  such that $z_0 = \phi(w_0 , t_0)$.
We define  $\mathrm{(VE^{m}_{\phi_0})}$ the \emph{$m^{th}$ variational equation} of (\ref{(1)})  along $\Gamma$ as the differential equation satisfied by the $\xi_{j}:=\frac{\partial^{j}\phi(z\,,\,t)}{\partial z^j}$ for $j\leq m$. For instance, $\mathrm{(VE^{3}_{\phi_0})}$ is given by (see \cite{Mo99a} and \cite{MoRaSi07a}):
\begin{eqnarray}
\nonumber \dot{\xi}_1 &=& d_{\phi_0} X_H \xi_1\\
\nonumber \dot{\xi}_2 &=& d^{2}_{\phi_0}X_H(\xi_1\,,\, \xi_1) + d_{\phi_0}X_H \xi_2\\
\nonumber \dot{\xi}_2 &=& d^{3}_{\phi_0}X_H(\xi_1\,,\, \xi_1\,,\, \xi_1) + 2 d^2_{\phi_0}X_H (\xi_1\,,\,\xi_2) + d_{\phi_0} X_H \xi_3.
\end{eqnarray}
For $m=1$, the equation $\mathrm{(VE^{1}_{\phi_0})}$ is a linear differential equation
$$\dot{\xi}_1 = A_{1} \xi_1\text{   where   }A_{1}:= d_{\phi_0} X_H= J\cdot Hess_{\phi_0}(H)\in\mathfrak{sp}(n\,,\, \mathbf{k})\text{  and  }
	\mathbf{k}:=\mathbb{C}\langle \phi_0(t) \rangle.$$
Higher order variational equations are not linear in general for $m\geq 2$. However, taking symmetric products,
one can give for every $\mathrm{(VE^{m}_{\phi_0})}$ an equivalent linear differential system  $\mathrm{(LVE^{m}_{\phi_0})}$ called the {\em linearized} $m^{th}$ {\em variational equation} (see \cite{MoRaSi07a}).

Since the $\mathrm{(LVE^{m}_{\phi_0})}$ are linear differential systems, we can consider them under the light of differential Galois theory (\cite{PuSi03a,Mo99a}).
We take as base field the differential field $\mathbf{k} := \mathbb{C}\langle \phi_{0} \rangle$ generated by the coefficients of $\phi_{0}$ and their derivatives.
Let  $K_m$  be a Picard Vessiot extension of $\mathrm{(LVE^{m}_{\phi_0})}$  for $m\geq 1$.
The differential Galois group $G_m := \text{Gal}(K_m /\mathrm{k} )$ of  $\mathrm{(LVE^{m}_{\phi_0})}$ is the group of all differential automorphisms of $K_m$ that leave the elements of $\mathbf{k}$ fixed.

 As $G_m$ is isomorphic to a algebraic linear group over $\mathbb{C}$, it is in particular an algebraic manifold and we can define its Lie algebra
 $\mathfrak{g}_m:=T_{I_{d_m}} G^{\circ}_m$, the tangent space of $G_m$ at $I_{d_m}$ (with $ d_m= \tiny\sum^{m}_{i=1} \binom{n+i-1}{n-1}$ the size of $\mathrm{(LVE^{m}_{\phi_0})}$). The Lie algebra $\mathfrak{g}_m$ is a complex vector space of square matrices of size $d_m$ whose Lie bracket is given by the commutator of matrices $[M\,,\,N] = M\cdot N - N \cdot M$.
 We say that $\mathfrak{g}_m$ is abelian if $[\mathfrak{g}_m\,,\, \mathfrak{g}_m] = 0$.

Following the notations above, we can finally give the Morales-Ramis-Sim\'o theorem:
\begin{theorem}\label{MRS}(\cite{MoRaSi07a}): If the Hamiltonian system (\ref{(1)})  is meromorphically Liouville integrable
then the $\mathfrak{g}_m$ are abelian for all $m\in \mathbb{N}^{\star}$.
\end{theorem}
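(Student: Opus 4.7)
The plan is to transport the Liouville-integrable structure of $X_H$ to the space on which $\mathrm{(LVE^m_{\phi_0})}$ is defined and then apply an abelianness criterion for linear Hamiltonian systems carrying enough commuting first integrals. This follows the template of the original arguments of Morales--Ramis ($m=1$) and of Morales--Ramis--Sim\'o (general $m$), so the task is really to identify the three structural inputs needed.

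First I would make precise the Hamiltonian nature of $\mathrm{(LVE^m_{\phi_0})}$. The $m$-th jet bundle of $M$ carries a natural prolonged symplectic form (built from $\omega$ and its successive Lie derivatives), the prolongation $X_H^{(m)}$ of $X_H$ is Hamiltonian for this form, and linearising it along $\phi_0$ and passing to symmetric powers gives exactly $\mathrm{(LVE^m_{\phi_0})}$ as a time-dependent linear Hamiltonian system over $\mathbf{k}$. In particular, $G_m$ embeds into a symplectic group and $\mathfrak{g}_m$ into the corresponding symplectic Lie algebra. Next, I would lift the first integrals: if $F_1=H, F_2, \ldots, F_n$ are meromorphic first integrals of (\ref{(1)}) in pairwise involution, their $m$-th jet prolongations $F_i^{(m)}$ are first integrals of $X_H^{(m)}$, and the identity $\{F_i, F_j\}=0$ lifts to $\{F_i^{(m)}, F_j^{(m)}\}^{(m)}=0$ because the jet Poisson bracket is itself a prolongation. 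Restricting along $\phi_0$ and extracting the homogeneous Taylor components yields enough polynomial first integrals of $\mathrm{(LVE^m_{\phi_0})}$, pairwise in involution and functionally independent, to present it as a Liouville-integrable linear Hamiltonian system.

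The proof is then concluded by the following algebraic fact, which is the technical core of \cite{MoRaSi07a}: a connected algebraic subgroup of a symplectic group leaving invariant a maximal family of functionally independent, pairwise Poisson-commuting polynomials on its natural representation is necessarily abelian. Applied to $G_m^\circ$ and the polynomial integrals constructed above, it gives $[\mathfrak{g}_m, \mathfrak{g}_m]=0$.

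The main obstacle is precisely this last step. The first two are essentially functorial once the jet-space symplectic geometry is in place, and the compatibility of the Poisson structure with Lie derivatives delivers the involutivity of the prolonged integrals for free. The substantive content lies in the abelianness criterion, whose proof requires a Ziglin-type analysis of how an algebraic subgroup of $\mathrm{Sp}(d_m, \mathbb{C})$ acts on its ring of polynomial invariants, together with a delicate verification that the integrals obtained from the prolongation of $F_1, \ldots, F_n$ remain functionally independent after linearisation along the particular curve $\Gamma$.
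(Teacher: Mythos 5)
This theorem is quoted in the paper directly from \cite{MoRaSi07a}; the paper gives no proof of it, so there is no internal argument to compare yours against. Judged on its own terms, your proposal is a fair reconstruction of the architecture of the original Morales--Ramis--Sim\'o argument (prolongation to jet spaces, lifting of the involutive integrals, then an abelianity criterion), but it is a road map rather than a proof, and its load-bearing steps are either inaccurate or missing.

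First, the ``algebraic fact'' you invoke is not correct as stated: for $m\geq 2$ the lifted integrals still number only $n$, while the representation space of $G_m$ has dimension $d_m$, which grows rapidly with $m$ and greatly exceeds $2n$; the family of invariant polynomials is therefore nowhere near ``maximal''. The actual criterion in \cite{MoRaSi07a} is more delicate: each element of $\mathfrak{g}_m$ is the Hamiltonian vector field of a function which is itself a first integral, and a dimension count on the involutive distribution generated by the given integrals forces these Hamiltonians to be pairwise in involution, whence $[\mathfrak{g}_m,\mathfrak{g}_m]=0$. Second, you never state the step where differential Galois theory actually enters: first integrals of $\mathrm{(LVE^{m}_{\phi_0})}$ rational over $\mathbf{k}$ correspond, by the Galois correspondence of Picard--Vessiot theory, to $G_m$-invariant rational functions on the solution space; without this, the integrals you construct say nothing about $G_m$ or $\mathfrak{g}_m$. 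Finally, the functional independence of the $m$-jets of $F_1,\dots,F_n$ along $\Gamma$, which you flag as a ``delicate verification'', is precisely Ziglin's Lemma (one must be allowed to replace the $F_i$ by suitable functions of them to make the leading jets independent); it is a genuine ingredient, not a formality, and your sketch neither states it precisely nor proves it.
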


Partial effective versions of this theorem have been proposed. In  \cite{MoRaSi07a} (and already \cite{Mo99a}), a local criterion is given for the case when the first variational equation has Weierstrass functions as coefficients~; in
\cite{MaSi09a}, a powerful approach using certified numerical computations is proposed. In the case of Hamiltonian systems with a homogeneous potential, yet another approach is given in \cite{CaDuMaPr10a}. \\
Our aim is to propose an alternative (algorithmic) method using a (constructive) notion of reduced form for the variational equation. This strategy should supply new criteria of non-integrability as well as some kind of ``normal form along a solution''. We will now explain this notion of reduced form (which we started investigating in
\cite{ApWea}) and show how to apply it. We will then apply our reduction method in detail on the well-known degenerated case of the Henon-Heiles system proposed in \cite{MoRaSi07a}.

\section{Reduced Forms}\label{section: reduced forms}

Let $(\mathbf{k}\,,\,'\,)$ be a differential field with field of constants $C$ and let $Y'=AY$ be a linear differential system with $A=(a_{i j})\in \mathcal{M}_{n}(\mathbf{k})$.
Let $G$ be the differential Galois group of this system and $\mathfrak{g}$ the Lie algebra of $G$.
We sometimes use the slight notational abuse $\mathfrak{g}=Lie(Y'=AY)$.
\\

Let $a_{1},\ldots,a_{r}$ denote a basis of the $C$-vector space spanned by the entries $a_{i,j}\in k$ of $A$.
Then we have
$$A:=\sum^{r}_{i=1} a_{i}(x) M_i ,\quad M_i \in\mathcal{M}_{n}(C).$$
This decomposition appears (slightly differently) in \cite{WeNo63a}, we call it a \emph{Wei-Norman decomposition} of $A$.
Although this decomposition is not unique (it depends on the choice of the basis $(a_{i})$), the $C-$vector space generated by the $M_i$ is unique.
\begin{definition} With these notations, the Lie algebra generated by $M_1 ,\ldots , M_r$ and their iterated Lie brackets is called \emph{the Lie algebra associated to $A$}, and will be denoted as $Lie(A)$.
\end{definition}

\begin{example} Consider the matrix $$
A_1:=\left[\begin{array}{cccc} 0 & 0 & 2/x & 0 \\ 0 & 0 & 0 & 2/x\\
\frac{2(x^4 - 10 x^2 + 1 )}{x(x^2 + 1)^2} & 0 & 0 & 0\\ 0 & -\frac{12 x }{(x^2 + 1)^2 } & 0 & 0\end{array}\right].
$$
Expanding the fraction $\frac{2(x^4 - 10 x^2 + 1 )}{x(x^2 + 1)^2}$ gives a Wei-Norman decomposition as $$A_{1}=\frac2{x} M_{1}
-\frac{12 x }{(x^2 + 1)^2 } M_{2},$$
where
$$M_{1}= \left[\begin{array}{cccc} 0 & 0 & 1 & 0 \\ 0 & 0 & 0 & 1\\
1 & 0 & 0 & 0\\ 0 & 0 & 0 & 0\end{array}\right],\,
M_{2}=\left[\begin{array}{cccc} 0 & 0 & 0 & 0 \\ 0 & 0 & 0 & 0\\
2 & 0 & 0 & 0\\ 0 & 1 & 0 & 0\end{array}\right]$$
and $Lie(A_{1})$ has dimension $6$.
\end{example}

A celebrated theorem of Kovacic (and/or Kolchin) states that $\mathfrak{g}\subset Lie(A)$. This motivates the following definition~:
\begin{definition}
We say that $A$ is in \emph{reduced form} if $Lie(A)=\mathfrak{g}$.
\end{definition}

A {\em gauge transformation} is a change of variable $Y=PZ$ with $P\in \mathrm{GL}(n\,,\, \mathbf{k}) $.
Then $Z'=RZ$ where $R:=P^{-1} (AP-P')$. In what follows, we adopt the notation $P[A]:=P^{-1} (AP-P')$
for the system obtained after the gauge transformation $Y=PZ$.\\

 The following theorem due to Kovacic (and/or Kolchin) ensures the existence of a gauge transformation $P\in \mathrm{GL}(n\,,\, \bar{\mathbf{k}}) $ such that
 $P[A]\in\mathfrak{g}(\bar{\mathbf{k}})$ when $ {\mathbf{k}} $ is a  $C_1$-field\footnote{A field $k$ is called a $C_1$-field (or cohomologically trivial) if any homogeneous polynomial $P\in k[X_1,\ldots,X_n]_{=d}$ of degree $d$ has a non-trivial zero in $k^n$ when $n>d$, i.e the number of variables is bigger than the degree.
 All differential fields of coefficients considered in this article will belong to the $C_1$ class.}

\begin{theorem}[see\cite{Ko71a,PuSi03a} p.25 Corollary 1.32 ]\label{Kovacic}
Let $k$  be a differential $C_1$-field. Let $A\in\mathcal{M}_n (k)$ and assume that the differential Galois Group $G$ of the system $Y'=AY$ is connected. Let $\mathfrak{g}$ be the Lie algebra of $G$. Let $H$ be a connected algebraic group such that
its Lie algebra $\mathfrak{h}$  satisfies $A\in\mathfrak{h}(k)$. Then $G\subset H$ and there exists $P\in H(k)$ such that the equivalent  differential equation $F'=\tilde{A}F$, with $Y=PF$ and $\tilde{A}=P[A]=P^{-1}AP-P^{-1}P'$, satisfies $\tilde{A}\in \mathfrak{g}(k)$.
\end{theorem}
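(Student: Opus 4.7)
The theorem has two assertions to establish: $G\subset H$ and the existence of a gauge $P\in H(k)$ sending the system into $\mathfrak{g}(k)$. The plan is to first produce a Picard--Vessiot extension $K/k$ that admits a fundamental matrix $U\in H(K)$ (not merely in $\mathrm{GL}_n(K)$). One can do this by putting a differential structure on $\mathcal{O}(H)\otimes_C k$: writing $A=\sum_i a_i M_i$ with $M_i\in\mathfrak{h}(C)$ and $a_i\in k$, one extends the derivation of $k$ to this algebra by adding $\sum_i a_i\,\mathcal{L}_{M_i}$, where $\mathcal{L}_{M_i}$ denotes the left-invariant derivation on $\mathcal{O}(H)$ attached to $M_i$. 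The tautological $H$-valued point then defines $U_0\in H(\mathcal{O}(H)\otimes_C k)$ satisfying $U_0'=AU_0$; quotienting by a maximal differential ideal yields the desired Picard--Vessiot ring $R\subset K$ with $U\in H(R)$. For any $\sigma\in G=\mathrm{Gal}(K/k)$ one has $\sigma(U)=U\cdot C_\sigma$ with $C_\sigma\in\mathrm{GL}_n(C)$, and the fact that both $U$ and $\sigma(U)$ lie in $H(K)$ forces $C_\sigma\in H(C)$. This yields the embedding $G\hookrightarrow H$.

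For the existence of $P$, consider the quotient map $\pi:H\to H/G$. Since the Galois action on $U$ is by right multiplication by elements of $G(C)$, the image $\pi(U)\in(H/G)(K)$ is Galois-fixed and hence descends to a $k$-rational point $\overline{P}\in(H/G)(k)$. The fiber $\pi^{-1}(\overline{P})$ is a $G$-torsor over $k$. Here the $C_1$ hypothesis on $k$ becomes decisive: Serre's Conjecture~I (proved by Steinberg for connected reductive groups, and reducing in general to that case plus $H^{1}(k,\mathbb{G}_a)=0$ via the unipotent radical/Levi decomposition) yields $H^{1}(k,G)=0$ for any connected linear algebraic group $G$ over a perfect $C_1$-field. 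Thus the torsor admits a $k$-rational point $P\in H(k)$ lying above $\overline{P}$.

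It remains to verify that this $P$ works. Set $V:=P^{-1}U$; since $\pi(U)=\pi(P)$, we get $V\in G(K)$. The change of variables $Y=PZ$ transforms $Y'=AY$ into $Z'=P[A]\,Z$, and $V$ is now a fundamental matrix of the new system, so $P[A]=V'V^{-1}$. Because $V$ takes values in the algebraic group $G$, the logarithmic derivative $V'V^{-1}$ lies in $\mathfrak{g}(K)$; combined with $P[A]\in\mathcal{M}_n(k)$ and the fact that $\mathfrak{g}$ is a $C$-linear subspace of $\mathfrak{gl}_n$, we conclude $P[A]\in\mathfrak{g}(K)\cap\mathcal{M}_n(k)=\mathfrak{g}(k)$. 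The principal obstacle in this scheme is the cohomological input: everything else is formal manipulation of Picard--Vessiot data, but the vanishing of $H^{1}(k,G)$ invoked in the lifting step carries the entire weight of the $C_1$ hypothesis and is where the theorem's assumption does real work.
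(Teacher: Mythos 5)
The paper does not actually prove this statement: it is quoted as a known theorem of Kolchin--Kovacic, with the proof delegated to \cite{Ko71a} and to Corollary 1.32 of \cite{PuSi03a}. Your argument is essentially that standard proof --- realize a Picard--Vessiot ring as a quotient of $\mathcal{O}(H)\otimes_C k$ so that a fundamental matrix lies in $H(K)$, deduce $G\subset H$, observe that the relevant fibre $\pi^{-1}(\overline{P})$ (which is exactly $\mathrm{Spec}$ of that Picard--Vessiot ring) is a $G$-torsor over $k$, conclude it is trivial by the Springer--Steinberg vanishing of $H^{1}(k,G)$ for connected $G$ over a (perfect) $C_1$-field, and finish with the logarithmic-derivative computation $P[A]=V'V^{-1}\in\mathfrak{g}(K)\cap\mathcal{M}_n(k)=\mathfrak{g}(k)$ --- and it is correct; you are also right that the entire weight of the $C_1$ hypothesis sits in the cohomological vanishing. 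One small slip: to get $U_0'=AU_0$ with $A=\sum_i a_iM_i$ acting by left multiplication you must use the \emph{right}-invariant vector fields $X\mapsto M_iX$ (the left-invariant ones $X\mapsto XM_i$ would give $U_0'=U_0A$); this is a harmless convention issue, since both families are derivations of $\mathcal{O}(H)$ and the rest of the argument is unaffected.
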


We say that a matrix $P\in\mathrm{GL}_n (\mathbf{k})$ is a \emph{reduction matrix} if $P[A]\in\mathfrak{g}(\mathbf{k})$, i.e $P[A]$ is in reduced form.
We say that a matrix $Q\in\mathrm{GL}_n (\mathbf{k})$ is a \emph{partial reduction matrix} when $Q[A]\in\mathfrak{h}(\mathbf{k})$ with $\mathfrak{g}\subsetneq\mathfrak{h}\subsetneq Lie(A)$. The general method used to put $A$ in a reduced form consists in performing successive partial reductions until a reduced form is reached. \\

In our paper \cite{ApWea}, we provide a reduction algorithm that computes a reduction matrix $P_1 \in\mathrm{Sp}(2,\mathbf{k})$ for $4\times 4$ linear differential systems $Y'=A_1Y$ with $A_1\in\mathfrak{sp}(2,\mathbf{k})$
(and also for $2\times 2$ systems). The first variational equation of a Hamiltonian system with $n=2$ degrees of freedom belongs  to this class of systems.  If $P_1$ is a reduction matrix for $A_1$ then $Sym^{m} P_1$ is a reduction matrix for $sym^{m} A_1$ because $Sym^m $ is a group morphism (see \cite{PuSi03a}, chapter 2 or \cite{FuHa91a} appendix B2).\\
In what follows, we will assume that we have reduced the first variational equation, that its Lie algebra is abelian (so that the Morales-Ramis theorem gives no obstruction to integrability), and use this to start reducing higher variational systems.\\

 We will follow the philosophy of Kovacic's theorem \ref{Kovacic} and look for reduction matrices inside $\exp(Lie(A))$.
 We remark that, in the context of Lie-Vessiot systems, an analog of the above Kolchin-Kovacic reduction theorem is  given by Blazquez and Morales (\cite{BlaMor10}, section 5, in particular theorems 5.3 and 5.8) in relation to Lie reduction.\\

The notion of a reduced form is useful in many contexts, such as: inverse problems (where the notion was first studied), the computation of  the transcendence degree of  Picard Vessiot extensions, fast resolution of linear differential systems with an  abelian Lie algebra and to implement the Wei-Norman method for solving linear differential systems with a solvable Lie algebra (using the Campbell-Hausdorff formula) \cite{WeNo63a}.  Reduced forms are also a new and powerful tool that provides (non-)abelianity and integrability obstructions for (variational) (see Theorem \ref{MRS}) linear differential equations arising from Hamiltonian mechanics, as we will now see.

\section{Reduced Forms for Higher Variational Equations}\label{section: reduced VEm}
\subsection{Preliminary results}\label{preliminary}
	
 Let $(\mathbf{k}\,,\, ' )$ be a differential field and let $d\in\mathbb{N}$.  Consider a linear differential system $Y' = AY$ whose matrix $A\in \CM_{d}(\mathbf{k})$ is block lower triangular  as follows:
 \begin{equation}
 A:=\left[\begin{array}{cc}A_{1} & 0 \\ A_{3} & A_{2}\end{array}\right]= A_{diag} + A_{sub} \text{   where    } A_{diag}=\left[\begin{array}{cc} A_1 & 0 \\ 0 & A_2\end{array}\right] \text{   and   } A_{sub}=\left[\begin{array}{cc} 0 & 0 \\ A_3 & 0\end{array}\right].
 \end{equation}
The submatrices satisfy $ A_{1}\in \CM_{d_1}(\mathbf{k})$, $ A_{2}\in \CM_{d_2}(\mathbf{k})$, $A_3\in \CM_{d_2 \times d_1} (\mathbf{k})$ and their dimensions add-up $d=d_1 + d_2$.

Let $${\CM }_{diag}:=\left\{\left[\begin{array}{cc}A_{1} & 0 \\ 0 & A_{2}\end{array}\right], A_{i}\in\CM_{d_{i}}(\mathbf{k})\right\} $$
and $${\CM }_{sub}:=\left\{\left[\begin{array}{cc}0 & 0 \\ B_{1} & 0\end{array}\right], B_{1}\in\CM_{d_{2}\times d_{1}}(\mathbf{k})\right\} $$
\begin{lemma}\label{diagsub}
Let $M_{1},M_{2}\in {\CM }_{diag}$ and $N_{1},N_{2}\in {\CM }_{sub}$. Then
$M_{1}.M_{2}\in  {\CM }_{diag}$,
$N_{1}.N_{2}=0$ (so that $N_{1}^{2}=0$ and $\exp(N_{1})=Id + N_{1}$),
and $[M_{1},N_{1}]\in {\CM }_{sub}$.
\end{lemma}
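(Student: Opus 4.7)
The plan is to verify each of the three assertions by direct block multiplication using the defining shapes of $\mathcal{M}_{diag}$ and $\mathcal{M}_{sub}$. The proof is essentially a short computation, so I will organise it as three little verifications, and the main ``obstacle'' is really just bookkeeping: keeping the block sizes $d_{1},d_{2}$ consistent and making sure the zero blocks land where claimed.

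First, for the product of two diagonal elements, I would write $M_{i}=\mathrm{diag}(A_{i},B_{i})$ with $A_{i}\in\CM_{d_{1}}(\mathbf{k})$ and $B_{i}\in\CM_{d_{2}}(\mathbf{k})$, and observe that the off-diagonal blocks of $M_{1}M_{2}$ are zero while the diagonal blocks become $A_{1}A_{2}$ and $B_{1}B_{2}$. This immediately gives $M_{1}M_{2}\in\CM_{diag}$.

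Next, for two subdiagonal elements I would write $N_{i}=\begin{pmatrix}0 & 0\\ C_{i} & 0\end{pmatrix}$ with $C_{i}\in\CM_{d_{2}\times d_{1}}(\mathbf{k})$. The block product $N_{1}N_{2}$ has $(2,1)$-block equal to $C_{1}\cdot 0+0\cdot C_{2}=0$, and all other blocks obviously vanish, hence $N_{1}N_{2}=0$. Specialising to $N_{2}=N_{1}$ yields $N_{1}^{2}=0$, and therefore the exponential series collapses to $\exp(N_{1})=Id+N_{1}$, as claimed.

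Finally, for the commutator $[M_{1},N_{1}]$ with $M_{1}=\mathrm{diag}(A_{1},B_{1})$ and $N_{1}=\begin{pmatrix}0 & 0\\ C_{1} & 0\end{pmatrix}$, I would compute
\[
M_{1}N_{1}=\begin{pmatrix}0 & 0\\ B_{1}C_{1} & 0\end{pmatrix},\qquad N_{1}M_{1}=\begin{pmatrix}0 & 0\\ C_{1}A_{1} & 0\end{pmatrix},
\]
so that $[M_{1},N_{1}]=\begin{pmatrix}0 & 0\\ B_{1}C_{1}-C_{1}A_{1} & 0\end{pmatrix}\in\CM_{sub}$. The only thing to check is that $B_{1}C_{1}-C_{1}A_{1}$ is a well-defined element of $\CM_{d_{2}\times d_{1}}(\mathbf{k})$, which follows from the compatibility of the block sizes. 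This completes all three verifications, and no deeper argument is needed.
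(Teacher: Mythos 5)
Your proof is correct and is precisely the direct block computation the paper has in mind: the paper itself omits the argument, remarking only that ``the proof is a simple linear algebra exercise.'' All three verifications check out, so nothing further is needed.
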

The proof is a simple linear algebra exercise.\\

\noindent Let $\mathfrak{g}:=Lie(Y'=AY)$ be the Lie algebra of the Galois group of $Y'=AY$ and let $\mathfrak{h}:=Lie(A)$ denote the Lie algebra associated to $A$.
We write $\mathfrak{h}_{diag}:=\mathfrak{h} \cap {\CM }_{diag}$ and $\mathfrak{h}_{sub}:=\mathfrak{h} \cap {\CM }_{sub}$.
The lemma shows that they are both Lie subalgebras (with $\mathfrak{h}_{sub}$ abelian) and
$\mathfrak{h}=\mathfrak{h}_{diag}\oplus \mathfrak{h}_{sub}$. Furthermore, $[\mathfrak{h}_{diag},\mathfrak{h}_{sub}]\subset \mathfrak{h}_{sub}$
(i.e $\mathfrak{h}_{sub}$ is an ideal in $\mathfrak{h}$). When $\mathfrak{h}_{diag}$ is abelian, obstructions to the abelianity of $\mathfrak{h}$
only lie in the brackets $[\mathfrak{h}_{diag},\mathfrak{h}_{sub}]$.

\subsection{A first partial reduction for higher variational equations}\label{subsection: first partial reduction}

Using the algorithm of \cite{ApWea}, we may assume that the first variational equation has been put into a reduced form. We further assume that the first variational equation has an abelian Lie algebra (so that there is no obstruction to integrability at that level).\\

As stated in section \ref{subsection:variational equations},  each $\mathrm{(VE^{m}_{\phi_0})}$ is equivalent to a linear differential system $\mathrm{(LVE^{m}_{\phi_0})}$ whose matrix we denote by $A_m$. The structure of the $A_m$ is block lower triangular , to wit
\begin{equation}
A_m :=\left[\begin{array}{cc} sym^{m}(A_1) & 0 \\ B_m & A_{m-1}\end{array}\right]\in M_{d_m} (\mathbf{k})
\end{equation}
where  $A_1$ is the matrix of  $\mathrm{(LVE^{1}_{\phi_0})}$. Assume that $A_{m-1}$ has been put in reduced form by
a reduction matrix  $P_{m-1}$. Then the matrix $Q_m \in \mathrm{GL}(d_m\,,\,\mathbf{k})$ defined by
$$
Q_m:=\left[\begin{array}{cc} Sym^{m}(P_1) & 0 \\ 0 & P_{m-1}\end{array}\right]
$$
puts the diagonal blocks of the matrix $A_m$ into a reduced form  (i.e the system would be in reduced form if there were no $B_{m}$)
and preserves the block lower triangular  structure. Indeed,
$$
Q_m [A_m] = \left[ \begin{array}{cc} Sym^m(P_1)[sym^m A_1] & 0 \\ \tilde{B}_{m} & P_{m-1}[A_{m-1}]\end{array}\right] $$
  where
  $$ \tilde{B}_{m}:=P^{-1}_{m-1} B_m Sym^{m}(P_1).
$$
Applying the notations of the previous section to $\tilde{A}:=Q_m [A_m]$, we see that $Lie(\tilde{A})_{diag}$ and $Lie(\tilde{A})_{sub}$ are abelian.
Obstructions to integrability stem from brackets between the diagonal and subdiagonal blocks. To aim at a reduced form, we need
transformations which ``remove'' as many subdiagonal terms as possible while preserving the (already reduced) diagonal part.
Recalling Kovacic's theorem \ref{Kovacic}, our partial reduction matrices will arise as exponentials from subdiagonal elements.

\subsection{Reduction tools for higher variational equations}
\begin{proposition}\label{partial reduction}
Let $A:=Q_m [A_m]$ as above be the matrix of the $m$-th variational equation $Y'=AY$ after reduction of the diagonal part.
Write $A=A_{diag} + \sum^{d_{sub}}_{i=1} \beta_i B_i$ with $\beta_i \in\mathbf{k}$, where the $B_{i}$ form a basis of $Lie(A)_{sub}$
(in the notations of section \ref{preliminary}).
\\
Let $[A_{diag}\,,\, B_1]=\sum_{i=1}^{{d_{sub}}} \gamma_{i} B_{i}$, $\gamma_{i} \in\mathbf{k}$.
Assume that the equation $y'=\gamma_{1}y+\beta_{1}$ has a solution $g_{1}\in k$. Set  $P:=\exp(g_{1}B_{1})=(Id+g_{1}B_{1})$.
Then $$ P[A] = A_{diag} + \sum_{i=\bf{2}}^{d_{sub}} \left[\beta_{i}+g_{1}\gamma_{i}\right]B_{i}, $$
i.e $P[A]$ no-longer has any terms in $B_{1}$.
\end{proposition}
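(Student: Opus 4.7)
The plan is a direct calculation of $P[A] = P^{-1}AP - P^{-1}P'$, exploiting the nilpotency of subdiagonal matrices and the abelianity of $Lie(A)_{sub}$.

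First, I would unpack the gauge matrix. Since $B_{1}\in Lie(A)_{sub}\subset \CM_{sub}$, Lemma \ref{diagsub} gives $B_{1}^{2}=0$, so $P=\exp(g_{1}B_{1})=Id+g_{1}B_{1}$ and $P^{-1}=Id-g_{1}B_{1}$ (direct check: $(Id+g_{1}B_{1})(Id-g_{1}B_{1})=Id-g_{1}^{2}B_{1}^{2}=Id$). Also $P'=g_{1}'B_{1}$, so
$$P^{-1}P'=(Id-g_{1}B_{1})g_{1}'B_{1}=g_{1}'B_{1}-g_{1}g_{1}'B_{1}^{2}=g_{1}'B_{1}.$$

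Next I would compute the conjugation $P^{-1}AP$. Expanding,
$$P^{-1}AP=(Id-g_{1}B_{1})A(Id+g_{1}B_{1})=A+g_{1}[A,B_{1}]-g_{1}^{2}B_{1}AB_{1}.$$
The final term vanishes: writing $A=A_{diag}+A_{sub}$, Lemma \ref{diagsub} gives $B_{1}A_{sub}B_{1}=0$ (product of three elements of $\CM_{sub}$), and $B_{1}A_{diag}\in\CM_{sub}$, so $(B_{1}A_{diag})B_{1}=0$ as well. Hence $P^{-1}AP=A+g_{1}[A,B_{1}]$.

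Now I would evaluate $[A,B_{1}]$. Using the decomposition $A=A_{diag}+\sum_{i}\beta_{i}B_{i}$,
$$[A,B_{1}]=[A_{diag},B_{1}]+\sum_{i=1}^{d_{sub}}\beta_{i}[B_{i},B_{1}].$$
All brackets $[B_{i},B_{1}]$ vanish because $Lie(A)_{sub}$ is abelian (Lemma \ref{diagsub}: products, and hence commutators, of subdiagonal matrices are zero). Thus $[A,B_{1}]=[A_{diag},B_{1}]=\sum_{i=1}^{d_{sub}}\gamma_{i}B_{i}$ by hypothesis, and
$$P^{-1}AP=A_{diag}+\sum_{i=1}^{d_{sub}}(\beta_{i}+g_{1}\gamma_{i})B_{i}.$$

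Finally I would assemble everything. Subtracting $P^{-1}P'=g_{1}'B_{1}$,
$$P[A]=A_{diag}+\bigl(\beta_{1}+g_{1}\gamma_{1}-g_{1}'\bigr)B_{1}+\sum_{i=2}^{d_{sub}}(\beta_{i}+g_{1}\gamma_{i})B_{i}.$$
The hypothesis $g_{1}'=\gamma_{1}g_{1}+\beta_{1}$ makes the coefficient of $B_{1}$ vanish, yielding exactly the formula stated. The only subtle point is justifying $B_{1}AB_{1}=0$, but this is immediate from Lemma \ref{diagsub}, so I do not foresee a genuine obstacle; the whole argument is a bookkeeping exercise once the subdiagonal nilpotency and abelianity are in hand.
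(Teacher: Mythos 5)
Your proof is correct and follows essentially the same route as the paper's: a direct expansion of $P^{-1}(AP-P')$ using $B_iB_j=0$ and $B_1A_{diag}B_1=0$ from Lemma \ref{diagsub}, with the ODE $g_1'=\gamma_1 g_1+\beta_1$ killing the $B_1$ coefficient at the end. The only (cosmetic) difference is that you compute the conjugation $P^{-1}AP$ first and subtract $P^{-1}P'$ afterwards, whereas the paper expands $AP-P'$ and then left-multiplies by $P^{-1}$; your explicit verification that $B_1AB_1=0$ for the full matrix $A$ is a slightly cleaner way of organizing the same cancellation.
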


\begin{proof} Recall that $P[A] = P^{-1}(AP-P')$ and
let $P=Id + g_1 B_1$. We have  $P'=g'_1 B_1$ whence
$$AP=(A_{diag} + \sum^{d_{sub}}_{i=1} \beta_i B_i)(I+g_{1} B_1) = A_{diag} + \sum_{i\geq 1} \beta_i B_i + g_{1} A_{diag} B_1$$
since $B_i B_j = 0$.  Therefore we have
 $AP-P' = A_{diag}+ g_{1} A_{diag} B_1 + (\beta_{1}-g_{1}')B_{1} + \sum^{d}_{i=2}\beta_i B_i$ which implies
\begin{eqnarray}
\nonumber P^{-1}(AP-P') &=& (Id-g_{1} B_1) \left[A_{diag}+ g_{1} A_{diag} B_1 + (\beta_{1}-g_{1}')B_{1} + \sum^{d}_{i=2}\beta_i B_i \right]\\
\nonumber&=&A_{diag} + g_{1}[A_{diag}\,,\, B_1] +  (\beta_{1}-g_{1}')B_{1} + \sum^{d}_{i=2}\beta_i B_i
\end{eqnarray}
 because $B_1 A_{diag} B_1 = B_1 [A_{diag} \,,\, B_1] + A_{diag} B_{1} B_{1} = B_{1}\left[\sum \gamma_i B_i\right]=0$.
 So,  as  $g_{1}'=\gamma_{1}g_{1}+\beta_{1}$, we obtain
 $$	
 P[A] = A_{diag} + \sum_{i=\bf{2}}^{d_{sub}} \left[\beta_{i}+g_{1}\gamma_{i}\right]B_{i}.
 $$

\end{proof}

\begin{remark} If $\gamma_{1}=0$ then we simply have $g_{1}=\int\beta_{1}$.
In that case, suppose that $\mathbf{k}=\mathbb{C}(x)$ and that $\beta_1 = R'_1 + L_1$ where  $R_1 \in \mathbb{C}(x)$ and $L_1 \in \mathbb{C}(x)$ has only simple poles, then $\int \beta_1 \notin \mathbb{C}(x)$. However, if we apply proposition \ref{partial reduction} with the change of variable $Y= (I + R_1 B_1) Z$  a term in $B_1$ will be left that will only contain simple poles.
\end{remark}

This proposition gives a nice formula for reduction. However, it is hard to i\-terate unless $Lie(A)$ has additional properties (solvable, nilpotent, etc)
because the next iteration may ``re-introduce'' $B_{1}$ in the matrix (because of the expression of the brackets).
This proposition provides a reduction strategy when  the map $[A_{diag},.]$ admits a triangular representation.
\\

To achieve this, we specialize to the case when the Lie algebra $\mathfrak{g}_{diag}$ has dimension (at most) $1$.
Then we have $A_{diag} = \beta_{0} A_{0}$ where $\beta_{0}\in k$ and $A_{0}$ is a constant matrix. The above proposition specializes nicely~:

\begin{example}
If $A_{diag}=\beta_0 A_0$ with $\beta_0\in\mathbf{k}$, $A_{0}\in\mathcal{M}_{n}(\mathbb{C})$ and $[A_0\,,\, B_1] = \lambda B_1$ for some constant eigenvalue $\lambda \neq 0$ then the change of variable $Y=PZ$ with $P:=(Id + g B_1)$, with $g'= \lambda g\beta_0 + \beta_1$, satisfies
$P[A] = \beta_0 A_0 + \sum^{d_{sub}}_{i\geq 2} \beta_i B_i$.
\end{example}

\noindent To implement this (and obtain a general reduction method), we let $\Psi_{0} : \mathfrak{h}_{sub} \rightarrow \mathfrak{h}_{sub}$, $B\mapsto [A_{0},B]$.
This is now an endomorphism of a finite dimensional vector space~; up to conjugation, we may assume the basis $(B_{i})$ to be
the basis in which the matrix of $\Psi_{0}$ is in Jordan form. We are then in position to apply the proposition iteratively (see the example below for details on the process).

\begin{remark}
Not that $A_0$ needs not be diagonal. The calculations of lemma \ref{partial reduction} and subsequent proofs remain valid when $A_0$ is block lower triangular.
\end{remark}
We have currently implemented this in Maple for the case when $A_{diag}$ is monogenous, i.e. its associated Lie algebra has dimension $1$. We will show the power of this method and of the implementation by giving a new proof of non-integrability of
the degenerate Henon-Heiles system whose first two variational equations are abelian but which is not integrable.

\section{A new proof of the non integrability of a degenerate H\'enon-Heiles system}\label{section: new proof}

In this section we consider the following H\'enon Heiles Hamiltonian  \cite{Mo99a}, \cite{MoRaSi07a},
\begin{equation}\label{HH}
H:=\frac{1}{2}(p^2_1 + p^2_2) + \frac{1}{2}(q^2_1 + q^2_2) + \frac{1}{3}q^3_1 + \frac{1}{2}q_1 q^2_2
\end{equation}
as given in  \cite{Mo99a}. This Hamiltonian's meromorphic non integrability was proved in \cite{MoRaSi07a}.
The Hamiltonian field is
$$
\dot{q}_1 = p_1 \,,\, \dot{q}_2 = p_2 \,,\, \dot{p}_1 = -q_1 (1+q_1) - \frac{1}{2}q^2_2 \,,\,\dot{p}_2 = -q_2 (1+ q_1 ).
$$
This degenerate H\'enon Heiles system was an important test case which motivated \cite{MoRaSi07a}. Its non integrability was reproved in \cite{MaSi09a} to showcase the method used by the authors. We follow in this tradition by giving yet another proof using our systematic method.
Our reduction provides a kind of ``normal form along $\phi$'' in addition to a non integrability proof.
The readers wishing to reproduce the detail of the calculations will find a Maple file at the url
\begin{center}\begin{verbatim}Êhttp://www.unilim.fr/pages_perso/jacques-arthur.weil/charris/ 
\end{verbatim}\end{center}
It contains the commands needed to carry on the reduction of the $\mathrm{(LVE^{m}_{\phi})}$ for $i=1\ldots 3$. The reduction of $\mathrm{(LVE^{3}_{\phi})}$ may take several minutes to complete.
\subsection{Reduction of $\mathrm{(VE^{1}_{\phi})}$}
On the invariant manifold $\lbrace q_2 = 0 \,,\, p_2 =0 \rbrace$
we consider the non punctual particular solution
$$\phi(t) = \left(\,\frac{3}{2}\frac{1}{\cosh(t/2)^2} - 1 \,,\, 0\,,\, -\frac{3}{2}\frac{\sinh(t/2)}{\cosh(t/2)^3}\,,\,0\right).$$
and the base field is  $\mathbf{k}=\mathbb{C}\langle \phi \rangle = \mathbb{C}(e^{t/2})$.
Performing the change of independent variable $x=\mathrm{e}^{t/2}$, we obtain  an equivalent system with coefficients in $\mathbb{C}(x)$ given by
$$
A_1:=\left[\begin{array}{cccc} 0 & 0 & 2/x & 0 \\ 0 & 0 & 0 & 2/x\\
\frac{2(x^4 - 10 x^2 + 1 )}{x(x^2 + 1)^2} & 0 & 0 & 0\\ 0 & -\frac{12 x }{(x^2 + 1)^2 } & 0 & 0\end{array}\right].
$$
\\
\vspace{0.5cm}
Applying the reduction algorithm from \cite{ApWea} we obtain the reduction matrix
$$ P_1:=\left[\begin{array}{cccc} -\frac{6(x-1)(x+1)x^2}{(x^2 + 1)^3} & 0 & -\frac{x^{10} + 15 x^8 - 16 x^6 - 144x^4+15x^2+1}{12x^2 (x^2 +1)^3} & 0\\ 0 &\frac{x^4-4x^2 + 1}{(x^2 + 1)^2} & 0 & -\frac{5 x^4 + 16 x^2 - 13}{3(x^2 + 1 )^2}\\  \frac{6 x^2 (x^4 - 4x^2 + 1)}{(x^2 + 1) ^4} & 0 & -\frac{x^{12} + 4 x^{10} + 121 x^8 + 256 x^6 - 249 x^4 - 4 x^2 - 1}{12 x^2 (x^2 + 1) ^4} & 0 \\ 0 & \frac{6(x^2 - 1)x^2}{(x^2 + 1)^3} & 0 & \frac{x^6 - x^4 - 17 x^2 + 1}{(x^2 + 1)^3} \end{array}\right]$$
  that yields the reduced form
  $$ A_{1,R}=\frac{5}{3 x}\left[\begin{array}{cccc}0 &0 & 1 & 0\\ 0 & 0 & 0& 6/5 \\ 0 & 0 & 0 & 0 \\ 0 & 0 & 0 & 0 \end{array}\right].
$$
We see that $\mathrm{dim}_{\mathbb{C}}\left(Lie(A_{1,R})\right) =1$ and since $\frac{5}{3 x}$ has one single pole, we cannot further reduce without extending the base field $\mathbf{k}$.
We find, $$\mathfrak{g}_1 = \mathrm{span}_{\mathbb{C}}\left\lbrace \tilde{D}_1 := \tiny\left[\begin{array}{cccc} 0 & 0 & 1 & 0\\ 0 & 0 & 0 & 6/5 \\ 0 & 0 & 0 & 0\\ 0 & 0 & 0 & 0\end{array}\right]\right\rbrace$$ which is trivially abelian and therefore doesn't give any obstruction to integrability.
\subsection{Reduction of $\mathrm{(LVE^{2}_{\phi})}$}
We want now to put   the matrix $A_2$ of $\mathrm{(LVE^{2}_{\phi})}$ into a reduced form. First we reduce the diagonal blocks as indicated in section \ref{subsection: first partial reduction} using the partial reduction matrix $Q_2:=\tiny\left[\begin{array}{cccc}Sym^2 P_1 &  0 \\ 0 & P_1\end{array}\right]$ so that we obtain a partially reduced matrix (its diagonal blocks are reduced whereas its subdiagonal block is not):
$$
Q_2[A_2]:=\left[\begin{array}{cccc} sym^{2} A_{1,R} & 0 \\ \tilde{B}_2 & A_{1,R}\end{array}\right] \text{  with  } \left\lbrace\begin{array}{ccc} Q_2[A_2]_{diag} & = & \tiny\left[\begin{array}{cc} sym^2 A_{1,R} & 0 \\ 0 & A_{1,R}\end{array}\right]\\
Q_2[A_2]_{sub} & = & \tiny\left[\begin{array}{cc} 0 & 0 \\ \tilde{B}_2 & 0\end{array}\right]\end{array}\right\rbrace
$$
We compute a Wei-Norman decomposition and we obtain an associated Lie algebra $Lie(Q_2[A_2])$ of dimension $11$ such that:
\begin{itemize}
\item[-]  On one hand we obtain $Lie(Q_2[A_2])_{diag} = \mathrm{span}_{\mathbb{C}}\left\lbrace D_{2,0}:=\left[\begin{array}{cc} sym^2\tilde{D}_1 & 0 \\ 0 & \tilde{D}_1 \end{array}\right]\right\rbrace$ with coefficient $\beta_0:= \frac{5}{3 x}$.
\item[-] On the other hand, $Lie(Q_2[A_2])_{sub} = \mathrm{span}_{\mathbb{C}}\lbrace \mathcal{B}_2 \rbrace$ where
$$\mathcal{B}_2 := \lbrace B_i := {\tiny\left[\begin{array}{cc} 0 & 0 \\ \tilde{B}_i & 0\end{array}\right] , i=1\ldots 10\rbrace}\text{  and  }Q_2[A_2]_{diag} = \sum^{10}_{i=1} \beta_{2,i} B_{2,i}\text{  with  }\beta_i \in \mathbf{k}.$$
\end{itemize}

The matrix of the application  $$\Psi_{2,0}\,:\, Lie(Q_2[A_2])_{sub}\,\longrightarrow\, Lie(Q_2[A_2])_{sub} \,,\, B_j \,\mapsto\, [D_{2,0}\,,\, B_j]$$ expressed in the base $\mathcal{B}_2$  takes the following form:
$$
\Psi_{2,0}:=\tiny \left[ \begin {array}{cccccccccc} 0&0&0&0&0&0&0&0&1&0
\\\noalign{\medskip}-2&0&0&0&0&0&0&0&0&0\\\noalign{\medskip}0&0&0&0&0&0
&0&0&0&1\\\noalign{\medskip}0&0&-6/5&0&0&0&-1&0&0&0
\\\noalign{\medskip}0&-3&0&0&0&0&0&0&0&0\\\noalign{\medskip}0&0&0&-{
\frac {12}{5}}&0&0&0&-1&0&0\\\noalign{\medskip}0&0&0&0&0&0&0&0&0&6/5
\\\noalign{\medskip}0&0&0&0&0&0&-{\frac {12}{5}}&0&0&0
\\\noalign{\medskip}0&0&0&0&0&0&0&0&0&0\\\noalign{\medskip}0&0&0&0&0&0
&0&0&0&0\end {array} \right].
$$
We denote by $J_{\Psi_{2,0}}$ the matrix of $\Psi_{2,0}$ expressed in its  Jordan basis, given by the matrices $C_{2,i} =\tiny\left[\begin{array}{cc} 0 & 0 \\ \tilde{C}_{2,i} & 0 \end{array}\right]$ and their coefficients $\gamma_{2,i}$ with $i=1\ldots 10$.
So the Jordan form is
$$
J_{\Psi_{2,0}}=\tiny \left[ \begin {array}{cccccccccc} 0&1&0&0&0&0&0&0&0&0
\\\noalign{\medskip}0&0&1&0&0&0&0&0&0&0\\\noalign{\medskip}0&0&0&1&0&0
&0&0&0&0\\\noalign{\medskip}0&0&0&0&0&0&0&0&0&0\\\noalign{\medskip}0&0
&0&0&0&1&0&0&0&0\\\noalign{\medskip}0&0&0&0&0&0&1&0&0&0
\\\noalign{\medskip}0&0&0&0&0&0&0&1&0&0\\\noalign{\medskip}0&0&0&0&0&0
&0&0&0&0\\\noalign{\medskip}0&0&0&0&0&0&0&0&0&1\\\noalign{\medskip}0&0
&0&0&0&0&0&0&0&0\end {array} \right].
$$
To perform reduction we will use the Jordan basis $\mathcal{C}_2 := \lbrace C_{2,i} \,,\, i=1\ldots 10\rbrace$. The decomposition given by the Jordan basis $\mathcal{C}_2$ is $Q_2[A_2]:=D_0 + \sum^{10}_{i=1} \gamma_i C_i$ with $\gamma_i \in\mathbf{k}$ , $i=1\ldots10$. We notice that $J_{\Psi_0}$ is made of three Jordan blocks \begin{itemize}
\item[-] two blocks of dimension $4$ :
$$\lbrace C_{2,4}\,,\,C_{2,3}\,,\,C_{2,2}\,,\,C_{2,1} \rbrace$$ and  $$\lbrace C_{2,8}\,,\,C_{2,7}\,,\,C_{2,6}\,,\,C_{2,5} \rbrace$$
\item[-] and one block of dimension $2$ :  $\lbrace C_{2,10}\,,\,C_{2,9} \rbrace$
\end{itemize}
The hypothesis of the first section of Proposition \ref{partial reduction} are satisfied. Therefore the partial reduction of $Q_2[A_2]$ is done in the following way:
\begin{itemize}
\item[-] Choose a Jordan block of dimension $d$ : $\lbrace C_{2,i}\,\ldots\, C_{2,i+d-1}\rbrace$.
It satisfies $\Psi_{2,0}(C_{2,i+s}) = C_{2,i+s-1}$ for $s=1\ldots d-1$.
Set $\tilde{A}_{2}:=Q_2[A_2]$ and set $s:=d-1$.
\item[-] For $s$ from $d-1$ to $1$,
  compute the decomposition  $\gamma_{2,i+s}= R'_{2,i+s} + L_{2,i+s}$ where $R_{2,i+s}\,,\, L_{2,i+s}\in\mathbf{k}$  and $L_{2,i+s}$ has only simple poles.
  \\
  Take the change of variable $P_{2,i+s}=Id + R_{2,i+s} C_{2,i+s}$ and perform the gauge transformation $P_{2,i+s}[\tilde{A}_{2}]$.
  \\
  If $L_{2,i+s}=0$ then  the Wei-Normal decomposition of $P_{2,i+s}[\tilde{A}_{2}]$ does not contain $C_{2,i+s}$ so $C_{2,i+s}\notin\mathfrak{g}_2$.
%  \\
%  If $L_{2,i+s}\neq 0$ then $C_{2,i+s}\in\mathfrak{g}_{2}$.
	%%no : it depends whether L_{2,i+s} is a linear combination of the already computed coefficients.  If not, we still need a proof that
	%%  $C_{2,i+s}\in\mathfrak{g}_{2}$ (but I believe it is true).
  \\
  Set $\tilde{A}_{2} := P_{2,i+s}[\tilde{A}_{2}]$ and set $s:=s-1$. Repeat this procedure  recursively until $s=1$.
\item[-] Choose a Jordan block that has not been treated. Repeat until there are no more Jordan blocks left untreated.
\end{itemize}

In this way, only will be left in the subdiagonal block the $C_{2,i}$ that have coefficients $L_{2,i}$ (after the procedure) containing only simple poles. In our case, we obtain a reduced matrix for $\mathrm{(LVE^{2}_{\phi})}$: $A_{2,R}:=\frac{1}{x} \tilde{C}_0$ and

$$
 \tilde{C}_0:=\tiny\left[ \begin {array}{cccccccccccccc} 0&0&\frac53\, &0&0&0&0&0&0&0
&0&0&0&0\\\noalign{\medskip}0&0&0&2\, &0&\frac53\, &0&0&0&0&0
&0&0&0\\\noalign{\medskip}0&0&0&0&0&0&0&\frac{10}{3}\, &0&0&0&0&0&0
\\\noalign{\medskip}0&0&0&0&0&0&0&0&\frac53\, &0&0&0&0&0
\\\noalign{\medskip}0&0&0&0&0&0&2\, &0&0&0&0&0&0&0
\\\noalign{\medskip}0&0&0&0&0&0&0&0&2\, &0&0&0&0&0
\\\noalign{\medskip}0&0&0&0&0&0&0&0&0&4\, &0&0&0&0
\\\noalign{\medskip}0&0&0&0&0&0&0&0&0&0&0&0&0&0\\\noalign{\medskip}0&0
&0&0&0&0&0&0&0&0&0&0&0&0\\\noalign{\medskip}0&0&0&0&0&0&0&0&0&0&0&0&0&0
\\\noalign{\medskip}0&0&-\frac{10}{3}\, &0&0&0&2\, &\frac{95}{18} &0&-\frac{20}{3}\, &0&0&\frac53\, &0
\\\noalign{\medskip}0&0&0&0&0&2\, &0&0&-\frac{20}{3}&0&0&0&0&2\, \\\noalign{\medskip}0&0&0&0&0&0&0&\frac{10}{3}\,
&0&0&0&0&0&0\\\noalign{\medskip}0&0&0&0&0&0&0&0&-2\, &0&0&0&0&0
\end {array} \right]
$$

As in the case of $A_{1,R}$, this matrix  $A_{2,R}$ is in a reduced form because $Lie(A_{2,R})$ is monogenous and $\frac{1}{x}$ only has simple poles. Therefore  $Lie(A_{2,R}) =\mathfrak{g}_2$ and $\mathfrak{g}_2$ is once more abelian bringing in no obstruction to integrability. We have then to look at $\mathrm{(LVE^{3}_{\phi})}$.
\subsection{Reduction of $\mathrm{(LVE^{3}_{\phi})}$}
We denote $P_2$ the reduction matrix of $A_2$. Once more we build a partial reduction matrix $Q_3 := \tiny\left[\begin{array}{cc} Sym^3 P_1 & 0 \\ 0 & P_2\end{array}\right]$ that puts the diagonal blocks of matrix  $A_3$ into a reduced form and we obtain the partially reduced matrix $Q_3[A_3] := \tiny\left[\begin{array}{cc} sym^3 A_{1,R} & 0 \\ \tilde{B}_3 & A_{2,R}\end{array}\right]$. In this case we have a Wei-Norman decomposition of $Q_3[A_3]$ of dimension $18$, and $\mathrm{dim}_{\mathbb{C}}(Lie(Q_3[A_3]))=38$.

We thus have
\begin{itemize}
\item[-]$\mathrm{dim}_{\mathbb{C}}(Lie(Q_3[A_3])_{diag})=1$ where
$$Lie(Q_3[A_3])_{diag}=\mathrm{span}_{\mathbb{C}} \lbrace D_{3,0}:=\tiny\left[\begin{array}{cc} Sym^3 \tilde{D}_1 & 0 \\ 0 & \tilde{C}_{2,0}\end{array}\right]\rbrace$$
\item[-] and $\mathrm{dim}_{\mathbb{C}}(Lie(Q_3[A_3])_{sub})=37$ and $Lie(Q_3[A_3])_{sub} =\mathrm{span}_{\mathbb{C}}(\mathcal{B}_3)$ with
$$\mathcal{B}_3=\tiny\lbrace B_{3,i}=\left[\begin{array}{cc} 0 & 0 \\ \tilde{B}_{3,i} & 0\end{array}\right] \,,\,{\tiny i=1\ldots 38} \rbrace$$
a base of generators of $Lie(Q_3[A_3])_{sub}$.
\end{itemize}
We define $\Psi_{3,0} \, : \, \mathfrak{h}_{3,sub}\,\longrightarrow \, \mathfrak{h}_{3,sub}\,,\,
 B \, \mapsto \, [D_{3,0} \,,\, B]$.
 It is nilpotent and its Jordan basis will satisfy the conditions of the first section of Proposition \ref{partial reduction}.
 In the Jordan basis  $\mathcal{C}_{3}:=\lbrace C_{3,i}\,,\, i=1\ldots 37\rbrace$, the Jordan form of  $J_{\Psi_{3,0}}$ is formed by the following Jordan blocks:
\begin{enumerate}
\item three Jordan blocks of dimension $5$ corresponding to
: $\lbrace C_{3,5},\ldots , C_{3,1}\rbrace,$  $\lbrace C_{3,11},\ldots , C_{3,6}\rbrace,$ $\lbrace C_{3,17},\ldots , C_{3,12}\rbrace$
\item three Jordan blocks of dimension $4$:  $\lbrace C_{3,18},\ldots , C_{3,21}\rbrace$ ,  $\lbrace C_{3,22},\ldots , C_{3,26}\rbrace$  and  $\lbrace C_{3,31},\ldots , C_{3,27}\rbrace,$
\item and two Jordan blocks of dimension $2$:  $$\lbrace C_{3,34},\ldots , C_{3,32}\rbrace \text{ and }\lbrace C_{3,37},\ldots , C_{3,35}\rbrace.$$
\end{enumerate}
 In the basis $\mathcal{C}_3$, a Wei-Norman decomposition is $$Q_3[A_3]= \beta_{0} D_{3,0} + \sum^{37}_{i=1} \gamma_{3,i} C_{3,i}.$$

We proceed blockwise as in the case of the second variational equation. This time, possible obstructions to integrability appear when handling the Jordan block $\lbrace C_{3,31}\,,\ldots \,,\, C_{3,27}\rbrace$. By decomposition $\gamma_{3,i}=R'_{3,i} + L_{3,i}$ (with $i=27\ldots 31$), we see that in particular $L_{3,30}$ and $L_{3,29}$ are non zero (and have "new poles", i.e not the pole zero of the coefficient of the reduced form of $(VE_2)$) and therefore we suspect
that   $C_{3,29} , C_{3,30}$ (or some linear combination) lie in  $\mathfrak{g}_{3}$.
Since neither $C_{3,30}$ nor $C_{3,29}$ commute with $D_{3,0}$ that would suggest that $\mathfrak{g}_{3}$ is not abelian and therefore, intuitively, the Hamiltonian (\ref{HH}) would be non integrable. We prove this rigorously in the following subsection.

\subsection{Proof of non-integrability}
After performing the partial reduction recursively for all blocks,  we obtain the matrix $\tilde{A}_{3,R}$.
It has a  Wei-Norman decomposition  $\tilde{A}_{3,R} =a_1 M_{3,1} + a_2 M_{3,2}$ where $M_{3,1}, M_{3,2}\in\mathcal{M}_{34}(\mathbb{C})$, $a_1 :=\frac{1}{x}$, $a_2:=\frac{x}{x^2 +1}$. The matrix  $M_{3,1}$ is lower block triangular and   $M_{3,2}\in Lie(\tilde{A}_{3,R})_{sub}$. We let $M_{3,3}:=[M_{3,1}\,,\, M_{3,2}]$, $M_{3,4}:=[M_{3,1}\,,\, M_{3,3}]$, $M_{3,5}:=[M_{3,1}\,,\, M_{3,4}]$ and check that $[M_{3,i}\,,\, M_{3,j}]=0$ otherwise. So $Lie(\tilde{A}_{3,R})$ has dimension $5$  and is generated by the $M_{3,i}$. Note that $M_{3,i}\in \mathcal{M}_{34,sub}(\mathbb{C})$ for $i\geq 2$.   Again we let $$\Psi\,:\, Lie(\tilde{A}_{3,R})\,\longrightarrow\, Lie(\tilde{A}_{3,R})\quad,\quad M\mapsto [M_{3,1}\,,\, M].$$ By construction, the matrix of $\Psi$ is $\tiny\left[\begin{array}{ccccc} 0& 0 & 0 & 0 & 0\\ 0 & 0 & 0 & 0 & 0 \\ 0 & 1 & 0 & 0 & 0\\ 0 & 0 & 1 & 0 & 0\\ 0 & 0 & 0 & 1 &0\end{array}\right]$.
\begin{theorem}
$\tilde{A}_{3,R}$ is a reduced form for $\mathrm{(LVE^3_{\phi})}$ and $\mathfrak{g}_{3}$ is not abelian so the degenerate H\'enon-Heiles Hamiltonian (\ref{HH}) is not meromorphically integrable.
\end{theorem}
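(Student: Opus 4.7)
The plan is to prove two things: (i) $\tilde A_{3,R}$ is already in reduced form, so $\mathfrak g_{3}=Lie(\tilde A_{3,R})$; and (ii) this Lie algebra is non-abelian, whereupon Theorem~\ref{MRS} rules out meromorphic integrability of~\eqref{HH}.

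For (i), Theorem~\ref{Kovacic} already gives the inclusion $\mathfrak g_{3}\subseteq Lie(\tilde A_{3,R})$. For the reverse inclusion I would argue by contradiction, assuming a further partial reduction $P$ exists that removes the $M_{3,2}$-coefficient while preserving the already-reduced diagonal part. Lemma~\ref{diagsub} and the abelianity of the subdiagonal part let me take $P=I+\sum_{j=2}^{5}g_{j}M_{3,j}$ with $g_{j}\in\mathbf k$, since each $M_{3,j}\in\CM_{sub}$ satisfies $M_{3,j}^{2}=0$ and the exponential truncates. Using the Jordan chain $[M_{3,1},M_{3,j}]=M_{3,j+1}$ for $j=2,3,4$ and $[M_{3,1},M_{3,5}]=0$, a direct computation (analogous to the one proving Proposition~\ref{partial reduction}) yields
\[
P[\tilde A_{3,R}] = a_{1}M_{3,1} + (a_{2}-g_{2}')M_{3,2} + (a_{1}g_{2}-g_{3}')M_{3,3} + (a_{1}g_{3}-g_{4}')M_{3,4} + (a_{1}g_{4}-g_{5}')M_{3,5}.
\]
Cancellation of the $M_{3,2}$-term forces $g_{2}'=a_{2}=x/(x^{2}+1)$, whose only antiderivative is $\tfrac12\log(x^{2}+1)\notin\mathbf k$.

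The main obstacle, and the key technical step, is to rule out \emph{every} candidate reduction matrix, not merely the basis-aligned one above; one must also show that no subtle cancellation further down the chain can succeed. The clean way is a transcendence argument: $a_{1}=1/x$ is singular at $0$ and $a_{2}=x/(x^{2}+1)$ at $\pm\ii$, so the iterated integrals that would solve the triangular chain above are polylogarithmic functions with singularities at the three distinct points $\{0,\ii,-\ii\}$. Standard transcendence results for such iterated integrals (in the spirit of Lewin's book on polylogarithms) show that none of them lies in $\mathbb C(x)$. Hence no admissible $P$ exists and $\tilde A_{3,R}$ is in reduced form.

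This gives $\mathfrak g_{3}=Lie(\tilde A_{3,R})$. Since $M_{3,1}$ and $M_{3,2}$ both lie in $\mathfrak g_{3}$ and $[M_{3,1},M_{3,2}]=M_{3,3}\neq 0$, the algebra $\mathfrak g_{3}$ is non-abelian, and Theorem~\ref{MRS} then forces the H\'enon--Heiles Hamiltonian~\eqref{HH} to be non-integrable in the meromorphic Liouville sense.
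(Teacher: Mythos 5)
There is a genuine gap in your argument for point (i), and it sits exactly where you say the ``key technical step'' is. Ruling out gauge transformations of the special form $P=I+\sum_{j\geq 2}g_{j}M_{3,j}$ with $g_j\in\mathbf{k}$ does not rule out a further reduction: by Theorem~\ref{Kovacic}, if $\mathfrak g_{3}\subsetneq Lie(\tilde A_{3,R})$ then a reduction matrix exists somewhere in $H(\mathbf k)$ for $H=\exp(Lie(\tilde A_{3,R}))$ (products of exponentials, including factors along $M_{3,1}$), and the target subalgebra $\mathfrak g_{3}$ need not be spanned by a subset of your basis $\{M_{3,i}\}$, so ``kill the $M_{3,2}$-coefficient'' is not the only scenario to exclude. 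You acknowledge this and then delegate the whole burden to ``standard transcendence results for iterated integrals'' --- but that assertion is precisely the nontrivial content of the theorem, and what you would actually need is not merely that each iterated integral fails to lie in $\mathbb C(x)$ (that alone does not close the argument), but the full \emph{algebraic independence} of $\ln(x)$, $\ln(1+x^{2})$, $Li_{2}(x^{2})$, $Li_{3}(x^{2})$, $Li_{4}(x^{2})$ over $\mathbb C(x)$. As stated, your proof of (i) is not complete.

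The paper closes this gap by a different and cleaner mechanism: a dimension count rather than an exhaustion of candidate reduction matrices. One applies Proposition~\ref{partial reduction} over successive field extensions to solve the system explicitly, exhibiting the Picard--Vessiot extension $K_{3}=\mathbb C(x)(\ln(x),\ln(1+x^{2}),Li_{2}(x^{2}),Li_{3}(x^{2}),Li_{4}(x^{2}))$, then proves $\mathrm{dtr}(K_{3}/\mathbf k)=5$ by a self-contained Galois-theoretic argument (Lemma~\ref{appendix}: the span of $1,\ln,\ln^{2}/2,\ln^{3}/6,Li_{1},\dots,Li_{4}$ is Galois-stable, the matrices $m_{0},m_{1}$ lie in the Lie algebra and generate a $5$-dimensional algebra). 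Since $\dim_{\mathbb C}\mathfrak g_{3}=\mathrm{dtr}(K_{3}/\mathbf k)=5=\dim_{\mathbb C}Lie(\tilde A_{3,R})$ and $\mathfrak g_{3}\subseteq Lie(\tilde A_{3,R})$, equality follows at once. If you want to keep your contradiction-style framing, you must either prove the algebraic independence statement yourself or reduce to it explicitly; once you have it, the transcendence-degree identity does all the remaining work and the case analysis over candidate $P$'s becomes unnecessary. Your final step (non-abelianity from $[M_{3,1},M_{3,2}]=M_{3,3}\neq 0$ plus Theorem~\ref{MRS}) is fine.
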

\begin{proof} We know that $Lie(\tilde{A}_{3,R})$ is non abelian so we just need to prove that $\tilde{A}_{3,R}$ is a reduced form. To achieve this we will construct a Picard Vessiot extension $K_{3}$ still using our ``reduction'' philosophy and we prove that  it has transcendence degree $5$: as $\mathfrak{g}_3\subset Lie(\tilde{A}_{3,R})$ and $\mathrm{dim}_{\mathbb{C}}(Lie(\tilde{A}_{3,R})) = 5$ this will show that $\mathfrak{g}_{3}=Lie(\tilde{A}_{3,R})$ because $\mathrm{dim}_{\mathbb{C}}(\mathfrak{g}_{3})=\mathrm{dtr}(K_3/ \mathbf{k})$ (see \cite{PuSi03a} Chap. 1.).

We apply proposition \ref{partial reduction}  to $\tilde{A}_{3,R}$. Apply the partial reduction $P_{1} = (Id + \int a_1 M_{3,1}) = Id+\ln(x) M_{3,1}$:  $P_1[\tilde{A}_{3,R}]$ contains no terms in $M_{3,2}$ and $ P_{1}[\tilde{A}_{3,R}] = a_1 M_{3,1} + \left(a_1 \int a_2\right) M_{3,3} $; we call $I_2 =\int ( a_1 \int a_2)  = Li_{2}(x^2)$ where $Li_2$ denotes the classical dilogarithm (see e.g \cite{Ca02a}). Similarly   we obtain $I_3$ and $I_4$  as coefficients of successive changes of variable. We are left with a system $Y'=a_1 M Y$, the Picard-Vessiot extension is
$$K_3= \mathbb{C}(x)(\ln(x)\,,\, \ln(1+x^2)\,,\, Li_{2}(x^2)\,,\, Li_{3}(x^2)\,,\, Li_{4}(x^2))$$

It is known to specialists that $\mathrm{dtr}(K_3/\mathbf{k})=5$ (and reproved for convenience below).
\end{proof}
\subsection{A self-contained proof of $\mathrm{dtr}(K_3/\mathbf{k}) =5$}\label{section: appendix}
To remain self-contained we propose a differential Galois theory proof of the following classical fact (see \cite{Ca02a} for instance). The proof is simple and beautifully consistent with our approach. To simplify the notations, we write the proof in the case of the classical iterated dilogarithms $Li_{j}(-x)$ but, of course, it applies mutatis mutandis to our case of $Li_{j}(x^2)$.
\begin{lemma}\label{appendix}
Let $K_3= \mathbb{C}(x)(\ln(x)\,,\, -\ln(1-x)\,,\, Li_{2}(-x)\,,\, Li_{3}(-x)\,,\, Li_{4}(-x))$, then $\mathrm{dtr}(K_3/\mathbf{k}) =5$
\end{lemma}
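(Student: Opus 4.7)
The plan is to exhibit $K_3$ as a tower of five successive simple transcendental extensions of $\mathbf{k}$, each obtained by adjoining an antiderivative, and then appeal to the additivity of transcendence degree along such a tower. Set
$$F_0=\mathbf{k}=\mathbb{C}(x),\; F_1=F_0(\ln x),\; F_2=F_1(-\ln(1-x)),\; F_{2+j}=F_{1+j}(Li_{1+j}(-x))\text{ for }j=1,2,3.$$
At each stage the new generator $t_i$ satisfies $t_i'\in F_{i-1}$, since $(\ln x)'=1/x$, $(-\ln(1-x))'=1/(1-x)$, and $Li_n(z)'=Li_{n-1}(z)/z$, so each $F_{i-1}\subset F_i$ is a Picard--Vessiot extension whose differential Galois group is a closed subgroup of the additive group $\mathbb{G}_a(\mathbb{C})$.

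The core input is the classical criterion (see \cite{PuSi03a} Chap.~1): for a differential field $L$ with constants $C$ and an element $a\in L$, if $t'=a$ in a no-new-constants extension, then either $t\in L$ or $t$ is transcendental over $L$ with $\mathrm{Gal}(L(t)/L)=\mathbb{G}_a(C)$. Combined with the multiplicativity of transcendence degrees in a chain of purely transcendental extensions, it suffices to prove at each step that $t_i\notin F_{i-1}$; each such step then contributes exactly $1$ to $\mathrm{dtr}$ and the total $5$ follows.

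For the two logarithmic steps, a pole/residue argument works. If $u\in \mathbb{C}(x)$ had $u'=1/x$, the residue of $u'$ at $0$ would vanish, contradicting the residue $1$; likewise $1/(1-x)$ is not the derivative of any element of $\mathbb{C}(x)$. Lifting to $F_1$, expand a putative $u\in F_1$ as a polynomial $\sum c_k (\ln x)^k$ with $c_k\in\mathbb{C}(x)$, differentiate, and compare leading degrees: the leading coefficient $c_d'$ forces $c_d\in\mathbb{C}$, then the next coefficient gives $c_{d-1}'=-d c_d/x$, which would place $\ln x$ in $\mathbb{C}(x)$ unless $c_d=0$, reducing inductively to the base case.

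For the polylog steps, argue by induction on $j$. Suppose $Li_j(-x)\in F_{j+1}$ and write it as a polynomial $\sum_{k=0}^{d}c_k L_{j-1}^{k}$ with $L_{j-1}=Li_{j-1}(-x)$ and $c_k\in F_{j}$. Differentiating and using $L_{j-1}'=L_{j-2}/x$, set the result equal to $L_{j-1}/x$; comparing coefficients of $L_{j-1}^{k}$ forces $c_d\in\mathbb{C}$ for $d\geq 2$, and then the coefficient of $L_{j-1}^{d-1}$ demands $c_{d-1}'=-dc_d L_{j-2}/x$, which would give $c_{d-1}\in F_j\setminus F_{j-1}$, contradicting the induction hypothesis. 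The residual case $d=1$ collapses to the claim that $L_{j-1}/x$ is not a derivative in $F_{j-1}(L_{j-1})$, which in turn reduces to the transcendence of $L_{j-1}$ over $F_{j-1}$. The main obstacle is precisely this polylog induction: one must cleanly control the leading coefficients and rule out hidden cancellations that would produce an antiderivative lying in a smaller field than induction allows. Once this is in place, the tower yields $\mathrm{dtr}(K_3/\mathbf{k})=5$.
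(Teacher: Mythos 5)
Your route --- presenting $K_3$ as a tower of five antiderivative extensions and applying the Kolchin--Ostrowski dichotomy at each floor --- is a legitimate classical alternative to the paper's argument, which instead lets the Galois group act on the $8$-dimensional space $V=\mathrm{span}_{\mathbb{C}}\{1,\ln x,\tfrac{\ln^2 x}{2},\tfrac{\ln^3 x}{6},Li_1,\dots,Li_4\}$, reads off a unipotent matrix $M_\sigma$ depending on constants $c_0,\dots,c_4$, and then gets dimension $5$ essentially for free from the Lie algebra structure: the freeness of $c_0$ and $c_1$ alone yields two non-commuting elements $m_0,m_1$ whose iterated brackets already span a $5$-dimensional algebra. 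Your tower, by contrast, must establish the ``freeness'' of every floor by hand, and that is where the sketch breaks down.

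The concrete gap is in the polylog induction. A minor point first: an element of $F_j=F_{j-1}(L_{j-1})$ is a priori a rational function of $L_{j-1}$, so you need the standard lemma from Liouville theory that an element whose derivative is polynomial in $L_{j-1}$ is itself polynomial of degree at most one higher --- fixable, but it must be invoked. The real problem is the claim that the residual case $d=1$ ``reduces to the transcendence of $L_{j-1}$ over $F_{j-1}$.'' That reduction is false: transcendence of the numerator does not prevent the integral from lying in the field. Indeed $\ln x$ is transcendental over $\mathbb{C}(x)$, yet $\int \ln(x)/x\,dx=\tfrac12\ln(x)^2\in\mathbb{C}(x)(\ln x)$; the entire content of the dilogarithm step is that $\int L_1/x$ behaves differently from $\int \ln(x)/x$. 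When you actually match coefficients for $u=\sum_k c_k L_{j-1}^k$ with $u'=L_{j-1}/x$, the coefficient of $L_{j-1}^0$ leaves you with auxiliary non-integrability claims such as ``$\ln(x)/(1-x)$ has no antiderivative in $\mathbb{C}(x)(\ln x)$'' (and, higher up the tower, analogous statements involving $L_{j-2}$ and cross terms), none of which are instances of the induction hypothesis $L_{j-1}\notin F_{j-1}$. The method can be pushed through, but you need a stronger inductive statement controlling which elements of each $F_j$ admit antiderivatives in $F_j$ (or a residue/weight argument on the whole tower); as written, the induction does not close.
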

\begin{proof}
Let us prove that the functions $$x\,,\,\ln(x)\,,\, -\ln(1-x)\,,\, Li_{2}(-x)\,,\, Li_{3}(-x)\,,\, Li_{4}(-x)$$
are algebraically independent using a differential Galois theory argument.
That $\ln(x)$ and $-\ln(1-x)$ are transcendent and algebraically independent over $\mathbb{C}(x)$ is a classical  easy fact.  We focus in proving  the transcendence and algebraic independence of $Li_{2}(-x)\,,\, Li_{3}(-x)$ and $Li_{4}(-x)$.  Set the following relations,
$$
Li_0(-x) := \frac{x}{1-x},\quad Li_{1}(-x) := -\ln(1-x),\quad Li_{2}(-x):=\int\frac{Li_{1}(-x)}{x} dx ,
$$
$$
 Li_{3}(-x):=\int\frac{Li_{2}(-x)}{x} dx ,\quad Li_{4}(-x):=\int\frac{Li_{3}(-x)}{x} dx
$$
and therefore $K_3 = \mathbb{C}(x)(\ln(x)\,,\,Li_{0}(-x),\ldots ,Li_{4}(-x))$ is  a differential field (with $Li'_{i}(-x) = \frac{Li_{i-1}(-x)}{x}$). Of course, $\mathrm{dtr}(K_3/\mathbf{k})\leq 5$.  Let us define
$$
V := \mathrm{span}_{\mathbb{C}}\left\lbrace 1\,,\,\ln(x)\,,\, \frac{\ln(x)^2}{2}\,,\, \frac{\ln(x)^3}{6}\,,\, Li_1(-x)\,,\, Li_2 (-x)\,,\, Li_3 (-x)\,,\, Li_4 (-x)\right\rbrace
$$
and consider and element $\sigma \in Gal(K_3/\mathbf{k})$. As  $\sigma(\ln'(x)) = \sigma(\frac{1}{x}) = \frac{1}{x}=\ln'(x) $  there exists a constant $c_0\in\mathbb{C}$ such that $\sigma(\ln(x)) = c_0$. Similarly, we obtain that $\sigma(\ln(x)^2 /2) = \ln(x)^2 / 2 + c_0\ln(x)+c^2_0$  and  $\sigma(\ln(x)^3 /6) = \ln(x)^3 / 6 + c^2_0\ln(x)/2 +c_0\ln(x)^2 /2 c^3_0$. Since $Li'_{1}(-x) =\frac{x}{x^2 +1}\in\mathbf{k}$ we have that $\sigma(Li'_{1}(-x)) = Li'_{1}(-x)$ and therefore there exists $c_1\in\mathbb{C}$ such that $\sigma(Li_{1}(-x)) =Li_{1}(-x) + c_1$.  As $Li'_{2}(-x)  =\frac{Li_{1}(-1)}{x}$ we have that $\sigma(Li'_{2}(-x) ) =\sigma(\frac{Li_{1}(-1)}{x}) =\frac{Li_{1}(-x) }{x} + \frac{c_1}{x}$ and there exists $c_2\in\mathbb{C}$ such that $\sigma(Li_{2}(-x))=Li_{2}(-x) + c_1 \ln(x) +c_2$. We prove similarly  the existence of $ c_3 , c_4 \in\mathbb{C}$ such that
\begin{eqnarray}
\nonumber\sigma(Li_{3}(-x))&=&Li_{3}(-x) +c_1 \frac{\ln(x)^2}{2} + c_2 \ln(x) + c_3\\
\nonumber\sigma(Li_{4}(-x)) &=&Li_{4}(-x) + c_1\frac{\ln(x)^3}{6} + c_2 \frac{\ln(x)^2}{2} + c_3 \ln(x) + c_4.
\end{eqnarray}
We see that $V$ is stable under the action of $Gal(K_3/\mathbf{k})$ and hence is the solution space of a differential operator $L\in\mathbf{k}[\frac{d}{dx}]$ of order $8$. Therefore, in this basis the matrix of the action of  $\sigma$  on $V$:
$$
M_{\sigma}:=\tiny\left[\begin{array}{ccccccccc} 1 & c_0 & c^2_0 /2 & c^3_0/6 & c_1 & c_2 & c_ 3 & c_ 4 \\
											0 & 1 &c_0 & c^2_0 & 0 & c_1 & c_2 & c_3\\
											0 & 0 & 1 & c_0 & 0 & 0 & c_1 & c_2\\
											0 & 0 & 0 & 1 & 0 & 0 & 0 & c_1\\
											0 & 0 & 0 & 0 & 1 & 0 & 0 & 0\\
											0 & 0 & 0 & 0 & 0 & 1 & 0 & 0\\
											0 & 0 & 0 & 0 & 0 & 0 & 1 & 0\\
											0 & 0 & 0 & 0 & 0 & 0 & 0 & 1\end{array}\right]
$$
As $\ln(x)$ and $\ln(1-x)$ are transcendent (and algebraically independent) we know that $c_0$ and $c_1$ span $\mathbb{C}$. It follows that $\mathfrak{g}_3$ contains at least
$$
m_0 :={\tiny \left[ \begin{array}{cccccccc} 0 & 1 & 0 & 0 & 0 & 0 & 0 & 0\\
									0 & 0 & 1 & 0 & 0 & 0 & 0 & 0\\
									0 & 0 & 0 & 1 & 0 & 0 & 0 & 0\\
									0 & 0 & 0 & 0 & 0 & 0 & 0 & 0\\
									0 & 0 & 0 & 0 & 0 & 0 & 0 & 0\\
									0 & 0 & 0 & 0 & 0 & 0 & 0 & 0\\
									0 & 0 & 0 & 0 & 0 & 0 & 0 & 0\\
									0 & 0 & 0 & 0 & 0 & 0 & 0 & 0																							 
								\end{array}\right]}  \quad \text{and}\quad 		
m_1 :={\tiny \left[ \begin{array}{cccccccc}
									0 & 0 & 0 & 0 & 1 & 0 & 0 & 0\\
									0 & 0 & 0 & 0 & 0 &1  & 0 & 0\\
									0 & 0 & 0 & 0 & 0 & 0 & 1 & 0\\
									0 & 0 & 0 & 0 & 0 & 0 & 0 & 1\\
									0 & 0 & 0 & 0 & 0 & 0 & 0 & 0\\
									0 & 0 & 0 & 0 & 0 & 0 & 0 & 0\\
									0 & 0 & 0 & 0 & 0 & 0 & 0 & 0\\
									0 & 0 & 0 & 0 & 0 & 0 & 0 & 0																							 
								\end{array}\right]}.
$$
Since $m_0$ and $m_1$ do not commute, we know that the Lie algebra generated by the iterated Lie brackets has dimension at least $3$. Iterating the brackets of $m_0$ and $m_1$ we obtain a subalgebra of $\mathfrak{g}_3$ of dimension $5$.
 Therefore we have $\mathrm{dtr}(K_{3}/ \mathbf{k}) \geq 5$ and since we know that $\mathrm{dtr}(K_{3}/ \mathbf{k})\leq 5$ we obtain the equality and the result follows.

\end{proof}

\begin{remark}
Horozov and Stoyanova \cite{HS07} make use of the properties of the dilogarithm in order to prove the non-integrability of some subfamilies of Painlev\'e VI equations: namely, they prove the non-abelianity of $\mathfrak{g}_2$, the Lie algebra of its second variational equation.
\end{remark}
\section{Conclusion}
The reduction method proposed here is systematic (and we have implemented it in Maple). Although it is currently limited to the case when $Lie(\mathrm{(VE^1_{\phi})})$ is one-dimensional, extensions to higher dimensional cases along the same guidelines are in progress and will appear in subsequent work.
In work in progress with S. Simon, we will show another use of reduced forms, namely the expression of taylor expansions of first integrals along $\phi$ are then greatly simplified. \\
We conjecture that our method is not only a partial reduction procedure but a complete reduction algorithm : assuming that $\mathrm{(LVE^m_{\phi_0})}$ is reduced (with an abelian Lie algebra), we believe that the output $\tilde{A}_{m+1,R}$ of our reduction procedure of sections 4 and 5 will always be a reduced form.
In the context of complex Hamiltonian systems, this would mean that our method would lead to an  effective version of the Morales-Ramis-Sim\'o theorem.

\bibliographystyle{amsalpha}

\end{document}